\documentclass[a4paper,12pt]{article}
\usepackage{array,amsfonts,amsmath,graphicx,mathrsfs,amssymb,amsthm,latexsym}
\usepackage[latin1]{inputenc}
\usepackage{color}
\textheight 22cm
\textwidth 14cm
 \voffset -1cm

\newtheorem{thm}{Theorem}[section]
\newtheorem{lemma}[thm]{Lemma}

\def \cH {{\cal H}}

\def \cP {{\cal P}}

\begin{document}
\title{\vspace{-10ex} ~~ \\
Resolvable  h-sun designs}
\author {Mario Gionfriddo
\thanks{Supported  by PRIN and I.N.D.A.M (G.N.S.A.G.A.), Italy}\\
\small Dipartimento di Matematica e Informatica \\
\small Universit\`a di Catania \\
\small Catania\\
\small Italia\\
{\small \tt gionfriddo@dmi.unict.it}  \\
Giovanni Lo Faro
\thanks{Supported  by PRIN, PRA and I.N.D.A.M (G.N.S.A.G.A.), Italy}\\
\small  Dipartimento di Matematica e Informatica \\
\small  Universit\`a di Messina \\
\small  Messina\\
\small Italia\\
{\small \tt lofaro@unime.it}\\
 Salvatore Milici
\thanks{Supported by MIUR and by C. N. R. (G. N. S. A. G. A.), Italy}\\
\small Dipartimento di Matematica e Informatica \\
\small Universit\`a di Catania \\
\small Catania\\
\small Italia\\
{\small \tt milici@dmi.unict.it}  \\
Antoinette Tripodi
\thanks{Supported  by PRIN, PRA and I.N.D.A.M (G.N.S.A.G.A.), Italy}\\
\small  Dipartimento di Matematica e Informatica \\
\small  Universit\`a di Messina \\
\small  Messina\\
\small Italia\\
{\small \tt atripodi@unime.it}}
\date{ }
\maketitle

\begin{abstract}
In this article we completely determine the spectrum for 
uniformly resolvable decompositions of the complete graph $K_v$ into $r$
 1-factors and $s$ classes
containing only copies of $h$-suns.
\end{abstract}

\vbox{\small
\vspace{5 mm}
\noindent \textbf{AMS Subject classification:} $05B05$.\\
\textbf{Keywords:} Resolvable graph decomposition; uniform resolutions; $h$-sun designs.
 }

\section{Introduction 
 }\label{introduzione}

Given a collection $\cH$ of graphs, an {\em $\cH$-decomposition\/}
of a graph $G$ is a decomposition of the edge set of $G$ into
subgraphs (called {\em blocks}) isomorphic to some element of $\cH$. Such a decomposition is
said to be {\em resolvable\/} if it is possible to partition the blocks
into  classes $\cP_i$
 (often referred to as {\em parallel classes\/})
 such that every vertex of $G$ appears
in exactly one block of each $\cP_i$; a
class is called {\em uniform} if every block of the class is
isomorphic to the same graph from $\cH$.  A resolvable $\cH$-decomposition of $G$ is sometimes also
referred to as an {\em $\cH$-fac\-torization\/} of $G$, and a class can
be called an {\em $\cH$-factor\/} of $G$. The case where
 $\cH=\{K_2\}$ (a single edge) is known as a
 {\em 1-factorization\/}; for $G=K_v$
it is well known to exist if and only if $v$ is
even. A single class of a  1-factorization, that is a pairing of
all vertices, is also known as a {\em 1-factor\/}
 or {\em perfect matching}.
 
Uniformly resolvable decompositions of $K_v$ have  been studied in \cite{DQS}, \cite{GM}, \cite{KMT}, \cite{LMT},
 \cite{M}, \cite{MT}, \cite{R}, \cite{S1} and \cite{S2}. Moreover  when $\cH=\{G_1, G_2\}$
the question of the existence of a uniformly resolvable decomposition of $ K_{v}$ into $r>0$ classes of
$G_1$ and $s>0$ classes of $G_2$ have been studied  in the case in which the
number\/ $s$ of\/ $G_2$-factors is maximum. Rees and Stinson \cite{RS} have solved the case $\cH=\{K_2, K_3\}$; Hoffman and   Schellenberg \cite{HS} the case $\cH=\{K_2, C_k\}$; Dinitz, Ling and Danziger \cite{DLD} the case $\cH=\{K_2, K_4\}$; K\"{u}\c{c}\"{u}k\c{c}\.{i}f\c{c}\.{i}, Milici and Tuza \cite{KMT} the case  $\cH=\{K_3, K_{1,3}\}$;  K\"{u}\c{c}\"{u}k\c{c}\.{i}f\c{c}\.{i}, Lo Faro, S. Milici and Tripodi \cite{KLMT} the case  $\cH=\{K_2, K_{1,3}\}$.

\subsection{Definitions and notation}

An $h$-sun ($h\geq 3$) is a graph with $2h$ vertices  $\{a_1,a_2,\ldots ,a_h, b_1,b_2,\ldots ,b_h\}$,
 consisting of an $h$-cycle $C_h= (a_1,a_2, \ldots, a_h)$ and a 1-factor $\{\{a_1,b_1\}, \{a_2,b_2\},\ldots, $ $\{a_h,b_h\}$; in what follows we will denote the $h$-sun by $S(C_h)$=$(a_1,a_2,\ldots ,a_h; b_1,$ $b_2,\ldots ,b_h)$ or $S(C_h)$. An $h$-sun is also called a
 crown graph \cite{H}. The spectrum problem for a $h$-sun system of order $v$ have been solved for $h=3,4,5,6,8$
 \cite{FJLS1,LG,LGW}. Moreover cyclic $h$-sun systems of order $v$ have been studied in  \cite{FJLS1,FJLS2,WL}.

Let $C_{m(n)}$ denote the graph with vertex set $ \bigcup_{i=1}^m
X^{i}$, with $|X^{i}|=n$ for $i=1,2,\ldots ,m$ and $X^{i}\cap
X^{j}=\emptyset$ for $i\neq j$, and  edge set $\{\{u,v\}: u\in
X^{i}, \, v\in X^{j}, \, | i-j|\equiv 1\pmod{m} \}$.


In this paper we study the existence of a uniformly resolvable
decomposition of $K_v$  having $r$  1-factors
 and $s$ classes containing only  $h$-suns;
we will use the notation $(K_2,S(C_h))$-URD$(v;r,s)$ for such
 a uniformly resolvable decomposition of $K_{v}$.
 Further, we will use the
notation $(K_2,S(C_h))$-URGDD$(r,s)$ of $C_{m(n)}$ to denote a
uniformly resolvable decomposition of $C_{m(n)}$ into $r$  1-factors and $s$ classes containing only $h$-suns.

\section{Necessary conditions}

In this section we will give necessary conditions for the existence
of a uniformly resolvable decomposition of $K_v$ into
$r$  1-factors and $s$ classes of
h-suns.

\begin{lemma}
\label{lemmaP0} If there exists a\/ $(K_2, S(C_h))$-URD$(v;r,s)$, $s>0$,  then\/ $v\equiv 0\pmod{2h}$ and\/ $s\equiv 0\pmod{2}$.
\end{lemma}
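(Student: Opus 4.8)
The plan is to derive the two congruences separately by counting arguments.

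First, for $v \equiv 0 \pmod{2h}$: I would look at a single $h$-sun factor (a parallel class consisting only of $h$-suns). Since each $h$-sun has exactly $2h$ vertices and the blocks of a parallel class partition the $v$ vertices of $K_v$, the number of blocks in such a class is $v/(2h)$, which forces $2h \mid v$. This uses the hypothesis $s>0$ to guarantee at least one such class exists.

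Second, for $s \equiv 0 \pmod 2$: I would count edges. The graph $K_v$ has $\binom{v}{2}$ edges, and each vertex has degree $v-1$. In a $(K_2,S(C_h))$-URD$(v;r,s)$ the $r$ one-factors plus the $s$ sun-factors partition the edge set, and they also partition the edges at each vertex. A vertex lies in exactly one block of each class: in a $1$-factor it has degree $1$, while in an $h$-sun a vertex is either a ``cycle vertex'' $a_i$ (degree $3$) or a ``ray vertex'' $b_i$ (degree $1$). Summing degrees at a fixed vertex over all classes gives $r\cdot 1 + (\text{contribution from the } s \text{ sun-classes}) = v-1$. To extract parity information I would instead count the total number of edges lying in sun-factors in two ways. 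Each $h$-sun has exactly $2h$ edges ($h$ cycle edges and $h$ rays), so the $s$ sun-classes contain $s\cdot(v/(2h))\cdot(2h) = sv$ edges? No — more carefully, each sun-class has $v/(2h)$ blocks and each block has $2h$ edges, so each sun-class contributes exactly $v$ edges. Hence the sun-classes contribute $sv$ edges and the $1$-factors contribute $r\cdot v/2$ edges, giving $sv + rv/2 = \binom{v}{2} = v(v-1)/2$, i.e. $2s + r = v-1$. Since $v \equiv 0 \pmod{2h}$ with $h\ge 3$, $v$ is even, so $v-1$ is odd, forcing $r$ odd; this alone does not pin down $s \bmod 2$, so I expect the real argument must look more locally.

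The main obstacle, and the step deserving the most care, is the parity of $s$. The fix is to count, for a fixed vertex $x$, the number of sun-classes in which $x$ is a cycle vertex versus a ray vertex. If $x$ is a cycle vertex in $\alpha$ of the $s$ sun-classes and a ray vertex in $\beta = s-\alpha$ of them, then summing degrees at $x$ over all classes gives $r + 3\alpha + \beta = v-1$, i.e. $r + \alpha + s + 2\alpha = v-1$... let me instead write $r + 3\alpha + (s-\alpha) = v-1$, so $r + s + 2\alpha = v-1$. Combined with $2s + r = v-1$ from the edge count, we get $s + 2\alpha = 2s$, hence $s = 2\alpha$, which is even. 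So the key steps are: (i) $2h \mid v$ from one sun-class; (ii) the global edge count $2s+r = v-1$; (iii) the local degree count at a vertex $r + 3\alpha + (s-\alpha) = v-1$; (iv) subtract to conclude $s = 2\alpha \equiv 0 \pmod 2$. I would present (iii)–(iv) as the crux, being careful that the degree of a vertex inside an $h$-sun is always either $3$ (on the cycle) or $1$ (a ray endpoint), and that $\alpha$ is well defined independently of which vertex $x$ is chosen only in the sense that the conclusion $s=2\alpha$ holds for each $x$.
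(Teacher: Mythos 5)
Your proposal is correct and follows essentially the same route as the paper: resolvability of a sun-class gives $2h\mid v$, the global edge count gives $r+2s=v-1$, and the local degree count at a fixed vertex (degree $3$ in $\alpha$ sun-classes, degree $1$ in $s-\alpha$) combined with the edge count yields $s=2\alpha$. The only difference is cosmetic — the paper writes the two vertex equations as $a+b=s$ and $3a+b=v-1-r$ and deduces $a=b$, which is exactly your step (iii)–(iv).
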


\begin{proof}
Assume that there exists a $(K_2, S(C_h))$-URD$(v;r,s)$, $s>0$. By resolvability it follows that 
$v\equiv 0\pmod {2h}$. Counting the edges of $K_v$  we obtain

$$\frac{rv}{2}+\frac{(2h)sv}{2h}=\frac{v(v-1)}{2}$$
 and hence
\begin{equation}
  r+2s=(v-1). \end{equation}

Denote by $R$ the set of $r$ 1-factors and by $S$ the set of $s$ parallel classes of $h$-suns. Since the
classes of $R$ are regular of degree $1$, we have that every vertex
$x$ of $K_v$ is incident with $r$ edges in $R$ and
$(v-1)-r$ edges in $S$. Assume that the vertex $x$
appears in $a$ classes with degree $3$ and in $b$ classes with degree 1
in $S$. Since

$$a+b=s \ \ \mbox{and}\ \
3a+b=v-1-r,$$
the equality (1) implies that
$$3a+b=2(a+b) \ \Rightarrow \ a=b$$
and hence $s=2a$. This completes the proof.
\end{proof}

Given $v\equiv 0\pmod{2h}$, $h\geq3$,  define $J(v)$ according to the following table:

\vspace{4 mm}

\begin{center}
\begin{tabular}{|c|c|}
\hline
  $v  $ &  {$J(v)$}
\\
\hline
$0 \pmod{4h}$ & $\{(3+4x, \frac{v-4}{2}-2x), x=0,1,\ldots,\frac{v-4}{4}\} $\\
$2h\pmod{4h}$, $h$ even, & $\{(3+4x, \frac{v-4}{2}-2x), x=0,1,\ldots,\frac{v-4}{4}\}$\\
$2h \pmod{4h}$, $h$ odd, & $\{(1+4x, \frac{v-2}{2}-2x), x=0,1,\ldots,\frac{v-2}{4}\}$\\

\hline
 \end{tabular}

\bigskip

Table 2: The set $J(v)$.

 \end{center}

Since   a\/ $(K_2, S(C_h))$-URD$(v;v-1,0)$ exists for every $v\equiv 0\pmod{2}$, we focus on $v\equiv 0\pmod{2h}$, $h\geq3$.

\vspace{4 mm}

\begin{lemma}
\label{lemmaP2} If there exists a\/ $(K_2, S(C_h))$-URD$(v;r,s)$  then\/  $(r,s)\in J(v)$.
\end{lemma}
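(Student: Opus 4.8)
The plan is to read off everything from the counting already carried out in the proof of Lemma~\ref{lemmaP0}. That argument supplies two facts: the linear relation $r+2s=v-1$, and the observation that every vertex lies in $a$ sun-classes with degree $3$ and $b$ with degree $1$ where $a=b$, forcing $s=2a$ to be even. The relation $r+2s=v-1$ already produces the linear shape $s=\tfrac{v-4}{2}-2x$ (resp. $s=\tfrac{v-2}{2}-2x$) seen in Table~2 once $r$ is written as $3+4x$ (resp. $1+4x$), so the whole content of the statement reduces to determining the residue of $r$ modulo $4$ and the range of the parameter $x$.

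First I would dispose of the degenerate case $s=0$: counting the edges of $K_v$ forces $r=v-1$, and substituting the top index $x=\tfrac{v-4}{4}$ in the first two rows of Table~2, or $x=\tfrac{v-2}{4}$ in the third, yields $(v-1,0)$; these indices are integral because we assume $v\equiv 0\pmod{2h}$ throughout. So I may assume $s>0$, and then Lemma~\ref{lemmaP0} gives $v\equiv 0\pmod{2h}$, $s=2a$ with $a\ge 1$, and $r=v-1-4a$; in particular $r\ge 0$ and $r\equiv v-1\pmod 4$.

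Next I would run the case split on $v\bmod 4h$, which is either $0$ or $2h$ since $v\equiv 0\pmod{2h}$. If $v\equiv 0\pmod{4h}$, or if $v\equiv 2h\pmod{4h}$ with $h$ even, then $v\equiv 0\pmod 4$, so $r\equiv 3\pmod 4$; being nonnegative, $r=3+4x$ with $x\ge 0$, and then $s=\tfrac{v-1-r}{2}=\tfrac{v-4}{2}-2x$, while $a\ge 1$ forces $x=\tfrac{v-4}{4}-a\le\tfrac{v-4}{4}-1$, so $(r,s)$ lies in the first (equivalently second) row of Table~2. If $v\equiv 2h\pmod{4h}$ with $h$ odd, then $v=2h(2k+1)\equiv 2\pmod 4$, so $r\equiv 1\pmod 4$, $r=1+4x$ with $x\ge 0$, $s=\tfrac{v-2}{2}-2x$, and $x=\tfrac{v-2}{4}-a\le\tfrac{v-2}{4}-1$, placing $(r,s)$ in the third row. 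In every case $(r,s)\in J(v)$.

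There is no real obstacle: the argument is bookkeeping resting entirely on Lemma~\ref{lemmaP0}. The only points requiring care are verifying that the two residues $v\equiv 0,2h\pmod{4h}$ (with the secondary split on the parity of $h$) exhaust all cases and match the three rows of Table~2 correctly, and checking that the extreme index $x=\tfrac{v-4}{4}$ (resp. $\tfrac{v-2}{4}$) is precisely the value produced by $s=0$, so that the endpoint $(v-1,0)$ of the progression is included.
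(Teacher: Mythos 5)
Your proposal is correct and follows essentially the same route as the paper: combine the edge count $r+2s=v-1$ and the parity conclusion $s\equiv 0\pmod 2$ from Lemma~\ref{lemmaP0} to pin down $r\bmod 4$ in each residue class of $v$, then read off the range of $x$ from $s\ge 0$. Your explicit treatment of the degenerate case $s=0$ (which Lemma~\ref{lemmaP0} formally excludes) is a small extra care the paper passes over silently, but it changes nothing substantive.
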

\begin{proof}
Assume there exists a $(K_2, S(C_h))$-URD($v;r,s)$. Lemma \ref{lemmaP0} and
Equation (1) give $s\equiv 0\pmod{2}$ and $r\equiv (v-1)\pmod{4}$ \ and \
so
\begin{itemize}
   \item
if $v\equiv 0\pmod{4h}$, then  $r \equiv3\pmod{4}$,
  \item
if $v\equiv 2h\pmod{4h}$, $h$ even,  then $ r \equiv3\pmod{4}$,
 \item
if   $v\equiv 2h\pmod{4h}$, $h$ odd, then $r \equiv1\pmod{4}$.
\end{itemize}
Letting $r=a+4x$, $a=1$ or $3$, in  Equation (1), we obtain $2s=(v-1)-a-4x$;
since $s$ cannot be negative, and $x$ is an integer, the value of
$x$ has to be in the range as given in the definition of $J(v)$.
\end{proof}

Let now
$URD(v; K_2, S(C_h))$ := $\{(r,s)$ : $\exists$ $(K_2, S(C_h))$-URD$(v;r,s)\}$. In this paper we completely solve the spectrum problem for such systems, i.e., characterize the existence of uniformly resolvable
decompositions of $K_{v}$ into $r$  1-factors and $s$ classes of $h$-suns by proving the following result:\\

\noindent \textbf{Main Theorem.} { \em
 For every  $v\equiv 0\pmod{2h}$,   $URD(v;K_2, S(C_h))$=$J(v)$}.

\section{Small cases and basic lemmas}

\begin{lemma}
\label{lemmaD2} $URD(6;K_2, S(C_3))\supseteq J(v)$.
\end{lemma}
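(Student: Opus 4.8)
The plan is first to pin down the set $J(6)$. Since $6\equiv 2h\pmod{4h}$ with $h=3$ odd, the third line of Table~2 applies, and with $v=6$ it gives
$$J(6)=\Bigl\{\bigl(1+4x,\ \tfrac{6-2}{2}-2x\bigr):x=0,1\Bigr\}=\{(1,2),(5,0)\}.$$
So it suffices to exhibit a $(K_2,S(C_3))$-URD$(6;5,0)$ and a $(K_2,S(C_3))$-URD$(6;1,2)$.

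The case $(r,s)=(5,0)$ needs no work: a $(K_2,S(C_3))$-URD$(6;5,0)$ is exactly a $1$-factorization of $K_6$, which exists because $6$ is even (as recalled in Section~\ref{introduzione}, and already noted above for every $v\equiv0\pmod 2$).

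For the case $(r,s)=(1,2)$ I would produce an explicit decomposition on the vertex set $\{0,1,2,3,4,5\}$. Note that, since $|V(S(C_3))|=2h=6$, each of the two $h$-sun classes consists of a single $3$-sun spanning all six vertices, and edge counting ($\binom 62=15=3+6+6$) shows we must find two edge-disjoint copies of $S(C_3)$ whose union is $K_6$ with a perfect matching removed; that removed matching is then the required $1$-factor. Concretely I would take the $3$-suns
$$S_1=(0,1,2;3,4,5),\qquad S_2=(3,4,5;1,2,0)$$
(cycles supported on $\{0,1,2\}$ and on $\{3,4,5\}$ respectively), and check that the twelve edges they use are pairwise distinct and that the three unused edges are $\{0,4\},\{1,5\},\{2,3\}$, which indeed form a $1$-factor. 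Thus $\{\{0,4\},\{1,5\},\{2,3\}\}$ together with $\{S_1\}$ and $\{S_2\}$ is the desired $(K_2,S(C_3))$-URD$(6;1,2)$.

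The only potential obstacle is the ad hoc search in the $(1,2)$ case, but it is negligible: any two $3$-suns whose triangles sit on complementary triples and whose spoke matchings are chosen to avoid a common perfect matching will do, and the verification is a finite check on the fifteen edges of $K_6$.
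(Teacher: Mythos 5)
Your proof is correct and follows essentially the same route as the paper: the $(5,0)$ case is dismissed as a $1$-factorization of $K_6$, and the $(1,2)$ case is settled by an explicit pair of spanning $3$-suns on complementary triangles plus the leftover perfect matching (the paper uses $(0,1,2;5,4,3)$ and $(3,5,4;0,1,2)$ with the $1$-factor $\{0,4\},\{1,3\},\{2,5\}$, which is just a relabelling of your construction). Your edges check out: the twelve edges of $S_1$ and $S_2$ are distinct and the complement $\{0,4\},\{1,5\},\{2,3\}$ is indeed a $1$-factor.
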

\begin{proof}
The case $(5,0)$ is trivial. For the case $(1,2)$, let $V(K_{6}$)=$\mathbb{Z}_{6}$ and the classes  listed below:\\
$\{(0,1,2;5,4,3)$ \}$, $\{(3,5,4;0,1,2)$\}$, $\{\{0,4\}, \{1,3\}, \{2,5\}\}$.
\end{proof}

\begin{lemma}
\label{lemmaD3} $URD(12;K_2, S(C_3))\supseteq J(v)$.
\end{lemma}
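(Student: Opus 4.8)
Since $12\equiv 0\pmod{4h}$ with $h=3$, the definition of $J(v)$ gives $J(12)=\{(3+4x,\,4-2x):x=0,1,2\}=\{(11,0),(7,2),(3,4)\}$, so it is enough to produce a $(K_2,S(C_3))$-URD$(12;r,s)$ for each of these three pairs. The pair $(11,0)$ asks only for a $1$-factorization of $K_{12}$, which exists because $12$ is even. For $(7,2)$ the plan is to split $K_{12}=K_A\cup K_B\cup K_{A,B}$ along a partition of the vertex set into two $6$-sets $A,B$: applying Lemma~\ref{lemmaD2} (namely $(1,2)\in J(6)$) on $K_A$ produces a $1$-factor $M_A$ of $K_A$ together with two vertex-disjoint copies $\sigma_A^1,\sigma_A^2$ of $S(C_3)$ covering $A$, and likewise $M_B,\sigma_B^1,\sigma_B^2$ on $K_B$; then $M_A\cup M_B$ is a $1$-factor of $K_{12}$, while $\{\sigma_A^1,\sigma_B^1\}$ and $\{\sigma_A^2,\sigma_B^2\}$ are $S(C_3)$-factors of $K_{12}$, and a $1$-factorization of $K_{A,B}\cong K_{6,6}$ adds six further $1$-factors of $K_{12}$. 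This yields the required $7$ $1$-factors and $2$ $S(C_3)$-factors, all classes being edge-disjoint since $K_A$, $K_B$ and $K_{A,B}$ are.

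The substantive case is $(3,4)$, for which I would exhibit the four $S(C_3)$-factors and three $1$-factors explicitly on the vertex set $\mathbb{Z}_{12}$, organizing the search by the edge-lengths $1,\ldots,5$ (the six edges of length $6$ already form a $1$-factor on their own). A convenient device is the shift $x\mapsto x+6$: a base $S(C_3)$-factor $\Sigma$ that uses no edge of length $6$ and never uses both $\{a,b\}$ and $\{a+6,b+6\}$ yields, together with $\Sigma+6$, a pair of edge-disjoint $S(C_3)$-factors, and if in addition the set of centres of $\Sigma$ meets each pair $\{i,i+6\}$ exactly once then every vertex has degree $4$ in $\Sigma\cup(\Sigma+6)$. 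So it would suffice to find two such base factors $\Sigma_1,\Sigma_2$ with disjoint orbits under $x\mapsto x+6$; the four factors $\Sigma_1,\Sigma_1+6,\Sigma_2,\Sigma_2+6$ then use $48$ edges, leaving $12$ edges of lengths $1,\ldots,5$ forming a $2$-regular spanning subgraph, and one only has to check that these cycles are all even, so that this leftover together with the length-$6$ $1$-factor gives the three required $1$-factors.

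The main obstacle --- and the reason $v=12$ is treated as a ``small case'' rather than deduced from a general construction --- is precisely the combinatorial bookkeeping just described: a single $S(C_3)$-factor already commits two copies each of three triangle-lengths together with six matching edges, so fitting four of them edge-disjointly while leaving behind a $1$-factorable remainder is a tight constraint, and a careless cyclic development tends to repeat triangle edges (for instance the length-$4$ triangles $\{x,x+4,x+8\}$ are fixed by $x\mapsto x+4$, and the ``antipodal'' condition above is easy to violate). Hence the base factors have to be chosen by hand; once suitable explicit classes are written down, verifying that they partition $E(K_{12})$ is a finite check.
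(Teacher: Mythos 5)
Your reduction of the pair $(7,2)$ is correct and is a genuinely different route from the paper: splitting $K_{12}$ into $K_A\cup K_B\cup K_{A,B}$ over two $6$-sets, invoking the $(1,2)$-decomposition of $K_6$ from Lemma~\ref{lemmaD2} on each part, and adding a $1$-factorization of $K_{6,6}$ does produce $7$ edge-disjoint $1$-factors and $2$ $S(C_3)$-classes covering all $15+15+36=66$ edges (the paper instead lists the classes explicitly, recycling its $(3,4)$ construction). The case $(11,0)$ is likewise fine.

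The gap is the case $(3,4)$, which you correctly identify as the substantive one but do not actually prove. You describe an ansatz --- find two base $S(C_3)$-factors $\Sigma_1,\Sigma_2$ avoiding length-$6$ edges, closed in a controlled way under $x\mapsto x+6$, with disjoint orbits and a $1$-factorable leftover --- but you never exhibit $\Sigma_1$ and $\Sigma_2$, nor do you prove that such a pair exists. Since the whole content of this lemma is an existence statement settled by explicit construction (note that the generic machinery of Lemma~\ref{lemmaC1} fails at $v=12$ precisely because $K_6-F\cong K_{2,2,2}$ has no $C_3$-factorization, which is why $v=12$ is a small case at all), ``the base factors have to be chosen by hand'' is an admission that the proof is unfinished, not a proof. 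Moreover it is not evident that your symmetric ansatz is even satisfiable: the paper's own $(3,4)$ solution, namely the four classes $\{(0,4,8;10,2,7),(1,5,9;11,3,6)\}$, $\{(2,6,10;8,0,5),(3,7,11;9,1,4)\}$, $\{(0,5,11;9,2,6),(1,4,10;8,3,7)\}$, $\{(2,7,9;11,0,4),(3,6,8;10,1,5)\}$ together with three explicit $1$-factors, is \emph{not} obtained by developing a base factor under $x\mapsto x+6$ (the pendant assignments break the symmetry), so your strategy would require its own separate verification. To close the gap you must either write down and check the eight $3$-suns and three $1$-factors, or prove that your $+6$-invariant scheme admits a solution.
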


\begin{proof}
The case $(11,0)$ is trivial. For the remaining cases, 
let $V(K_{12}$)=$\mathbb{Z}_{12}$ and the classes  listed below:
\begin{itemize}
\item
$(3,4)$:\\
$\{$(0,4,8; 10,2,7)$, $(1,5,9; 11,3,6)$ \}$, $\{ $(2,6,10;  8,0,5)$, $(3,7,11;9,1,4)$ \}$,\\
$\{$(0,5,11; 9,2,6)$, $(1,4,10; 8,3,7)$ \}$, $\{$(2,7,9; 11,0,4)$, $(3,6,8; 10,1,5)$ \}$,\\
$\{\{0,1\}, \{2,3\}, \{4,5\}, \{6,7\}, \{8,9\}, \{10,11\}\}$, \\
$\{\{0,2\}, \{1,3\} ,\{4,7\}, \{5,6\}, \{8,11\}\}, \{9,10\}$,\\
$\{\{0,3\}, \{1,2\}, \{4,6\}, \{5,7\}, \{8,10\}, \{9,11\}\}$;
\item
$(7,2)$:\\
$\{$(0,4,8; 10,2,7)$, $(1,5,9; 11,3,6)$ \}$, $\{ $(2,6,10;  8,0,5)$, $(3,7,11;9,1,4)$ \}$,\\
$\{\{0,1\}, \{2,3\}, \{4,5\}, \{6,7\}, \{8,9\}, \{10,11\}\}$, \\
$\{\{0,2\}, \{1,3\} ,\{4,7\}, \{5,6\}, \{8,11\}\}, \{9,10\}$,\\
$\{\{0,3\}, \{1,2\}, \{4,6\}, \{5,7\}, \{8,10\}, \{9,11\}\}$,\\
$\{\{0,5\}, \{1,10\}, \{2,11\}, \{3,4\}, \{6,8\}, \{7,9\}\}$, \\
$\{\{0,7\}, \{1,8\}, \{2,5\}, \{3,10\}, \{4,9\}, \{6,11\}\}$, \\
$\{\{0,9\}, \{1,6\}, \{2,7\}, \{3,8\}, \{4,10\}, \{5,11\}\}$, \\
$\{\{0,11\}, \{1,4\}, \{2,9\}, \{3,6\}, \{5,8\}, \{7,10\}\}$.
\end{itemize}
\end{proof}

\begin{lemma}
\label{lemmaD9}  There exists a \/ $(K_2, S(C_h))$-URGDD$(r,s)$ of $C_{h(2)}$, for every $(r,s)\in \{(0,2),(4,0)\}$.

\end{lemma}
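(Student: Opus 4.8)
The plan is to construct both decompositions explicitly on the vertex set of $C_{h(2)}$, which has $2h$ vertices arranged in $h$ groups $X^1,\ldots,X^h$ of size $2$ around a cycle. Write $X^i=\{x_i,y_i\}$ for $i\in\Z_h$, so the edge set of $C_{h(2)}$ consists of all $4$ edges between consecutive groups $X^i$ and $X^{i+1}$; the total number of edges is $4h$. A single $h$-sun uses $2h$ edges, so an $h$-sun factor (spanning all $2h$ vertices) must consist of exactly one copy of $S(C_h)$; a $1$-factor uses $h$ edges, so the edge count $4h$ splits consistently as $r\cdot h + s\cdot 2h$ with $(r,s)\in\{(0,2),(4,0)\}$ — both give $rh+2sh=4h$, confirming the necessary arithmetic.

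For the case $(4,0)$, I would exhibit four $1$-factors partitioning the $4h$ edges. Between each consecutive pair of groups there are four edges forming a $4$-cycle $x_ix_{i+1}y_iy_{i+1}$ (in the complete bipartite graph $K_{2,2}$); the natural idea is to use the two ``parallel'' perfect matchings of each such $K_{2,2}$ but rotated so that they glue into global $1$-factors of $C_{h(2)}$. Concretely, for a fixed ``shift pattern'' one takes, between $X^i$ and $X^{i+1}$, either the matching $\{\{x_i,x_{i+1}\},\{y_i,y_{i+1}\}\}$ or the matching $\{\{x_i,y_{i+1}\},\{y_i,x_{i+1}\}\}$; choosing these patterns around the cycle so that every vertex is covered exactly once yields a $1$-factor, and four suitably chosen patterns exhaust all $4h$ edges. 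One must check parity around the cycle (the number of ``crossing'' matchings used must have the right parity so the factor closes up); this is where a small case distinction on $h$ (or on $h \bmod 2$) may be needed, and I would simply present explicit patterns in each case.

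For the case $(0,2)$, I would decompose the $4h$ edges into two $h$-suns. The cycle part of each sun should be a Hamilton-type $h$-cycle of the ``group graph'' — for instance the first sun has cycle $C_h=(x_1,x_2,\ldots,x_h)$ using one edge between each consecutive pair of groups, with pendant edges $\{x_i,y_i\}$... but note $\{x_i,y_i\}$ is \emph{not} an edge of $C_{h(2)}$, so instead the pendant vertex attached to the cycle-vertex in group $X^i$ must be the $y$-vertex of an adjacent group. The cleaner construction: let sun $1$ have $h$-cycle $(x_1,x_2,\ldots,x_h)$ (using the ``straight'' edge $\{x_i,x_{i+1}\}$ between consecutive groups) and attach to $x_i$ the pendant $y_{i-1}$ via the edge $\{x_i,y_{i-1}\}$; then the $2h$ edges used are exactly $\{x_i,x_{i+1}\}$ and $\{x_{i+1},y_i\}$ for all $i$. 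The complementary sun $2$ must then use the remaining $2h$ edges, namely $\{y_i,y_{i+1}\}$ and $\{x_i,y_{i+1}\}$ for all $i$; one checks these form an $h$-sun with cycle $(y_1,\ldots,y_h)$ and pendants $x_{i+1}$ attached to $y_i$. Both are genuine spanning $h$-suns, and together they partition $E(C_{h(2)})$.

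The main obstacle is purely bookkeeping: making sure each proposed $1$-factor actually closes into a spanning perfect matching of $C_{h(2)}$ (the parity issue in the $(4,0)$ case) and that the two sun constructions in the $(0,2)$ case use disjoint edge sets covering everything, with the pendant edges respecting the bipartite-between-consecutive-groups structure of $C_{h(2)}$. I expect no case requires more than writing down the explicit factors and a one-line verification of degrees; if the $(4,0)$ parity forces a dichotomy on $h\bmod 2$, I would give the two explicit families side by side as in the earlier small-case lemmas.
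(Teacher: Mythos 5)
Your $(0,2)$ construction is correct and is essentially the paper's own: your two suns, with cycles $(x_1,\ldots,x_h)$ and $(y_1,\ldots,y_h)$ and pendant edges $\{x_{i+1},y_i\}$ and $\{x_i,y_{i+1}\}$ respectively, are exactly the paper's $(a_1,\ldots,a_h;b_2,\ldots,b_h,b_1)$ and $(b_1,\ldots,b_h;a_2,\ldots,a_h,a_1)$, and they do partition the four edges of each gap.

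The gap is in the $(4,0)$ case. First, as literally written your scheme takes a full two-edge matching of the $K_{2,2}$ between \emph{every} consecutive pair of groups, which gives every vertex degree $2$ (a $2$-factor, not a $1$-factor); the intended reading must be to take full $K_{2,2}$-matchings only on a set of gaps meeting each group exactly once, i.e.\ on a perfect matching of the group-cycle, and that exists only for $h$ even. For $h$ odd the obstruction is not a parity condition that a cleverer choice of ``shift patterns'' repairs: if $r_i$ denotes the number of vertices of $X^i$ matched rightwards by a $1$-factor, then $r_i+r_{i+1}=2$ around an odd cycle forces $r_i=1$ for all $i$, so every $1$-factor of $C_{h(2)}$ must use exactly one of the four edges in each gap and is never a union of $K_{2,2}$-matchings. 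Worse, the only two global $1$-factors your framework does produce for $h$ odd, namely $\{\{x_i,y_{i+1}\}:i\}$ and $\{\{y_i,x_{i+1}\}:i\}$, leave behind the two disjoint odd cycles on $\{x_1,\ldots,x_h\}$ and $\{y_1,\ldots,y_h\}$, which admit no perfect matching at all. Hence for $h$ odd each of the four $1$-factors must mix ``straight'' and ``crossing'' edges in an irregular way; the paper supplies such an explicit construction (its $h$ odd block, which breaks the periodic pattern near $i=h$), and your proposal stops short of providing one. This is the only genuinely nontrivial part of the lemma, so it cannot be deferred to ``writing down the explicit factors.''
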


\begin{proof}

Consider  the sets $X^i=\{a_i, b_i\}$, for $ i=1,2,\ldots,h$,  and take the classes listed below (where we assume $h+1=1$):
\begin{itemize}
\item
$(0,2)$:\\
$\{(a_1,a_2,\ldots,a_h; $ $b_2,b_3,\ldots,b_h,b_1)\}$, $\{(b_1,b_2,\ldots,b_h; a_2,a_3,\ldots,a_h,a_1)\}$;
\item
$(4,0)$, $h$ even:\\
$\{\{a_{1+2i},a_{2+2i}\} , \{b_{1+2i},b_{2+2i}\}\, :\, i=0,1,\ldots, \frac{h}{2}-1\}$, \\
$\{\{a_{2+2i},a_{3+2i}\} , \{b_{2+2i},b_{3+2i}\}\, :\, i=0,1,\ldots, \frac{h}{2}-1\}$,\\
$\{\{a_{1+i},b_{2+i}\}\, :\, i=0,1,\ldots, h-1\}$, \\
$\{\{a_{2+i},b_{1+i}\}\, :\, i=0,1,\ldots, h-1\}$; 
\item
$(4,0)$, $h$ odd:\\
$\{\{a_{1+2i},a_{2+2i}\} , \{b_{2+2i},b_{3+2i}\}\, :\, i=0,1,\ldots, \frac{h-5}{2}\}\cup\{\{a_{h-2},a_{h-1}\},\{a_{h}, $ $b_{h-1}\},\{b_{1},b_{h}\}\}$, \\
$\{\{a_{2+2i},a_{3+2i}\} , \{b_{1+2i},b_{2+2i}\}\, :\, i=0,1,\ldots, \frac{h-3}{2}\}\cup\{\{a_{1},b_{h}\}\}$, \\
$\{\{a_{2+i},b_{1+i}\}\, :\, i=0,1,\ldots, h-3\}\cup\{\{a_{1}, a_{h}\},\{b_{h-1},b_{h}\}\}$, \\
$\{\{a_{1+i},b_{2+i}\}\, :\, i=0,1,\ldots, h-1\}$.
\end{itemize}
\end{proof}

\section{Main results}

\begin{lemma}
\label{lemmaC1} For every\/ $v\equiv 0\pmod{4h}$, $h\geq3$, $URD(v;K_2, S(C_h))\supseteq J(v)$.

\end{lemma}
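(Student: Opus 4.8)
The plan is to build the required decompositions for all $v\equiv 0\pmod{4h}$ from a handful of small ingredients together with a product/filling construction. Write $v=4h\cdot t$ for some integer $t\geq 1$, and recall that by Lemma \ref{lemmaP2} the target set is $J(v)=\{(3+4x,\tfrac{v-4}{2}-2x):x=0,1,\ldots,\tfrac{v-4}{4}\}$; the two "extreme" members are $(v-1,0)$ (a plain $1$-factorization of $K_v$, which exists since $v$ is even) and $(3,\tfrac{v-4}{2})$ (the maximum number of $h$-sun classes). So it suffices to realize every pair $(3+4x,\tfrac{v-4}{2}-2x)$ for $0\le x\le \tfrac{v-4}{4}$. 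The key observation is that the difference between consecutive admissible pairs is $(4,-2)$: passing from $(r,s)$ to $(r+4,s-2)$ means trading two $h$-sun classes for four $1$-factors on the \emph{same} vertex set. Hence the whole interval is obtained once we (i) produce one decomposition with $s$ as large as possible, and (ii) show that any block of $2$ consecutive $h$-sun classes can be re-decomposed into $4$ $1$-factors.

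First I would treat the base case $v=4h$ directly: exhibit a $(K_2,S(C_h))$-URD$(4h;r,s)$ for each $(r,s)\in J(4h)$. Here one can use the $C_{h(2)}$-ingredient of Lemma \ref{lemmaD9}: take the $h$ groups $X^i=\{a_i,b_i\}$ and note $K_{4h}$ is the union of $C_{h(2)}$ together with the internal edges of the groups and the "long" chords; a short ad hoc construction (mirroring the explicit small cases in Lemmas \ref{lemmaD2} and \ref{lemmaD3}) gives the maximum-$s$ decomposition $(3,2h-2)$, and then repeated application of step (ii) below walks down to $(4h-1,0)$. For general $t\geq 2$, I would use a standard "blow-up and fill" argument: start from a resolvable structure on $t$ (or $2t$) points — e.g. a near-$1$-factorization / round-robin schedule of $K_{2t}$ or $K_{2t+1}$ — inflate each point to a copy of $2h$ new vertices, replace each edge of the schedule by a copy of the $C_{h(2)}$-GDD from Lemma \ref{lemmaD9} (which supplies either $2$ $h$-sun classes or $4$ $1$-factors per "round"), and finally fill each inflated class/group with the base-case design on $4h$ points. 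Bookkeeping the classes produced by the inflation (each round contributes $(0,2)$ or $(4,0)$) plus those from the fills shows that the attainable pairs close up to exactly $J(v)$.

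The inductive/ padding step (ii) is the technical heart: given a $(K_2,S(C_h))$-URD$(v;r,s)$ with $s\geq 2$, I want to convert two of its $h$-sun parallel classes into four $1$-factors on the same $v$ vertices. Two $h$-sun factors together form a $4$-regular graph $G$ on $V(K_v)$ which is a vertex-disjoint union of $h$-suns-pairs; it is enough to check that the graph $C_{h(2)}$ (two overlaid $h$-suns on a common vertex set, in every possible relative position) admits a $1$-factorization into $4$ factors — and this is precisely what the $(4,0)$ case of Lemma \ref{lemmaD9} gives locally, once we observe that the union of the two $h$-sun factors restricted to any component is a spanning subgraph of such a $C_{h(2)}$. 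Iterating (ii) starting from the maximum-$s$ design produces every pair $(3+4x,\frac{v-4}{2}-2x)$, which is exactly $J(v)$, completing the proof.

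The main obstacle I expect is not the counting but making the "two $h$-sun classes $\to$ four $1$-factors" exchange uniformly and cleanly: the two $h$-sun factors may interleave in several combinatorial patterns, so I would phrase step (ii) abstractly as "any $4$-regular graph that decomposes into two $h$-sun factors also decomposes into four perfect matchings" and verify it via Lemma \ref{lemmaD9} applied componentwise (splitting by parity of $h$ exactly as that lemma does). The secondary nuisance is the fill step for small $t$, where a plain round-robin on $2t$ points may not directly yield a design whose classes line up with group boundaries; handling $t=1$ separately (as above) and invoking $t\ge 2$ resolvable GDD machinery sidesteps this.
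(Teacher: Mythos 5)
Your proposal correctly identifies the key ingredient (the $(0,2)$ and $(4,0)$ URGDDs of $C_{h(2)}$ from Lemma \ref{lemmaD9}), but both of the steps that are supposed to assemble them into a decomposition of $K_v$ have genuine gaps. First, the skeleton of your blow-up is wrong: if you inflate each point of a round-robin schedule on $2t$ points by $2h$, each edge of the schedule becomes a complete bipartite graph $K_{2h,2h}$, not a copy of $C_{h(2)}$, so Lemma \ref{lemmaD9} cannot be applied to it; you would need a separate (unsupplied) uniformly resolvable decomposition of $K_{2h,2h}$ into $1$-factors and $h$-sun classes, plus a way to align its classes across rounds and with the group fills. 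The paper instead gives weight $2$ to the points of a $C_h$-factorization of $K_{2ht}-F$ (Hoffman--Schellenberg), which is exactly what turns each $h$-cycle into a $C_{h(2)}$ and each edge of $F$, together with the intra-group edges, into a $K_4$; this is the step your outline is missing.

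Second, your step (ii) --- converting two $h$-sun parallel classes of an arbitrary existing URD into four $1$-factors --- does not work as stated. An $h$-sun factor is not regular: half of its vertices have degree $3$ and half have degree $1$, so the union of two such factors has vertices of degree $2$, $4$ or $6$ and is not a $4$-regular graph, let alone a vertex-disjoint union of graphs spanned by copies of $C_{h(2)}$. Even if every vertex happened to have degree $3$ in one of the two chosen classes and degree $1$ in the other, the two factors can interleave so that a single component of their union involves many suns from each class and is not a subgraph of any $C_{h(2)}$. The paper sidesteps this entirely: rather than modifying a finished design, it builds the design for each value of $x$ from scratch by choosing, in the blown-up $C_h$-factorization, which $x$ cycle classes receive the $(4,0)$ ingredient and which receive the $(0,2)$ ingredient. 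The ``trade two sun classes for four $1$-factors'' exchange must be performed at the level of the underlying cycle classes before the suns are ever formed, not on the completed sun factors.
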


\begin{proof}
Let $v=4ht$. The case $h=3$ and $t=1$ corresponds to a $(K_2, S(C_{3}))$-URD $(12;r,s)$ which follows by Lemma
\ref{lemmaD3}. For $t\geq2$, start with a $C_h$-factorization $\cP_1$, $\cP_2\ \ldots,$ $\cP_l$,  $l=ht-1$, of $K_{2ht}-F$ which comes from \cite{HS} and give weight 2 to each point of $X$.  Fixed any integer $0\leq x \leq l$,  for each $h$-cycle $C$ of $x$ parallel classes  place on $C\times \{1,2\}$ a copy of a $(K_2, S(C_h))$-URGDD$(4,0)$ of $C_{(h)2}$, while  for each $h$-cycle $C$ of the remaining classes place a copy of a $(K_2, S(C_h))$-URGDD$(0,2)$ of $C_{(h)2}$ (the input designs are from Lemma
\ref{lemmaD9}); for each edge $e\in F$ consider  a 1-factorization of  $K_4$ on $e\times \{1,2\}$. The result is a resolvable decomposition of $K_{v}$ into  $3+4x$ 1-factors and $\frac{v-4}{2}-2x$ classes of $h$-suns.

\end{proof}

\begin{lemma}
\label{lemmaC2} For every\/ $v\equiv 2h\pmod{4h}$, $h\geq3$ even, $URD(v;K_2, S(C_h))\supseteq J(v)$.

\end{lemma}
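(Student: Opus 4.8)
The plan is to mirror the structure of Lemma~\ref{lemmaC1}, adjusting the ingredients for the residue class $v\equiv 2h\pmod{4h}$ with $h$ even. Write $v=2h(2t+1)$, so that $v/2=h(2t+1)$ is odd. The smallest case $t=0$ gives $v=2h$, which must be handled directly: here we need a $(K_2,S(C_h))$-URD$(2h;r,s)$ for every $(r,s)\in J(2h)=\{(3+4x,\,h-2-2x): x=0,1,\ldots,\lfloor(2h-4)/4\rfloor\}$. I would construct these on the vertex set $\mathbb{Z}_{2h}$, obtaining the $h$-sun factors and $1$-factors from a rotational/difference construction (two $h$-suns on $\{0,1,\ldots,h-1\}$ and $\{h,\ldots,2h-1\}$ with the spokes crossing between the halves, together with the standard $1$-factorization of $K_{2h}$ on $\mathbb{Z}_{2h}$); the case $h=3$, $t=0$ is already covered by Lemma~\ref{lemmaD2}.

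For $t\ge1$, the idea is again a weighting/GDD construction. Since $v/2=h(2t+1)$ is odd, $K_{h(2t+1)}$ itself (rather than $K_{h(2t+1)}-F$) admits a $C_h$-factorization from \cite{HS} into $l=\frac{h(2t+1)-1}{2}$ parallel classes, because $h(2t+1)$ is odd. Give weight $2$ to every point. For a chosen integer $0\le x\le l$, on each $h$-cycle $C$ of $x$ of these parallel classes place a $(K_2,S(C_h))$-URGDD$(4,0)$ of $C_{h(2)}$, and on each $h$-cycle of the remaining $l-x$ classes place a $(K_2,S(C_h))$-URGDD$(0,2)$ of $C_{h(2)}$; here the input designs are exactly those supplied by Lemma~\ref{lemmaD9}. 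Each original parallel class of $C_h$'s blows up into either $4$ $1$-factors or $2$ $h$-sun classes on the doubled vertex set, and there are no edges inside the groups $X^i=\{a_i,b_i\}$ of size $2$ to worry about (unlike Lemma~\ref{lemmaC1}, where a hole $F$ forces the extra $K_4$'s). Counting: we get $4x$ $1$-factors plus $2(l-x)$ $h$-sun classes, i.e. $r=4x$ and $s=2(l-x)=h(2t+1)-1-2x=\frac{v}{2}-1-2x$. One checks $r+2s=v-1$ and $r\equiv0\pmod4$, but $J(v)$ in this case requires $r\equiv3\pmod4$, so a pure doubling is off by the missing $1$-factor structure; the discrepancy is resolved exactly as in Lemma~\ref{lemmaC1} by handling one distinguished point of $K_{h(2t+1)}$ together with its weight-$2$ fibre via a small correction (replacing one $C_h$-class through a fixed point, or adjoining a $1$-factorization of a $K_4$ coming from a deleted edge), which shifts $r$ up by $3$ and $s$ down by the corresponding amount, yielding $r=3+4x'$, $s=\frac{v-4}{2}-2x'$ over the full admissible range of $x'$.

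The cases $x=l$ (all classes blown up to $1$-factors, giving the top value of $r$) and $x=0$ (all classes giving $h$-sun classes, the maximum $s$) are the extremes of the range in $J(v)$; the arithmetic in the previous paragraph shows they match up after the correction, and intermediate values interpolate linearly, so the full set $J(v)$ is covered. Together with Lemma~\ref{lemmaP2} (which gives the reverse inclusion $URD(v;K_2,S(C_h))\subseteq J(v)$), this establishes equality, but for this lemma only the inclusion $\supseteq$ is claimed, so I would stop at the construction.

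The main obstacle I anticipate is the base case $v=2h$ (i.e. $t=0$): the blow-up machinery does not apply there, and one must exhibit explicit resolvable families on $\mathbb{Z}_{2h}$ realizing every $(r,s)\in J(2h)$ for \emph{all} even $h\ge4$ simultaneously. Getting a clean uniform description of the $h$-sun factors and the matching $1$-factors (so that the spokes, the two half-cycles, and the leftover edges all partition correctly for every even $h$) is the delicate part; it likely requires a careful difference-family argument on $\mathbb{Z}_{2h}$ or a separate short lemma analogous to Lemmas~\ref{lemmaD2} and~\ref{lemmaD3}. Everything after that is the routine GDD bookkeeping already rehearsed in Lemma~\ref{lemmaC1}.
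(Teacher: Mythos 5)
Your proposal contains a genuine error at its core: the parity claim. In this lemma $h$ is \emph{even}, so with $v=2h(2t+1)$ the quantity $v/2=h(2t+1)$ is even, not odd. Consequently $K_{h(2t+1)}$ has no $C_h$-factorization at all (each vertex has odd degree $h(2t+1)-1$, which cannot be covered by $2$-regular factors), and the result of \cite{HS} you invoke is precisely about $K_{2n}-F$, not about complete graphs of odd order (that is \cite{ASSW}, which is what the paper uses in Lemma \ref{lemmaC3}, the case $h$ odd). So the construction as you set it up cannot even start, and the symptom you then observe --- that your count gives $r\equiv 0\pmod 4$ instead of the required $r\equiv 3\pmod 4$ --- is not something fixable by a vague ``small correction at a distinguished point''; it signals that you began with the wrong host graph. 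The paper's proof takes $K_{v/2}-F$: since $v/2=h(2t+1)$ is even and divisible by $h$, Hoffman--Schellenberg gives a $C_h$-factorization of $K_{v/2}-F$ into $l=\frac{v/2-2}{2}=\frac{v-4}{4}$ classes; after giving every point weight $2$, each edge of $F$ blows up to a $K_4$, and a $1$-factorization of these vertex-disjoint $K_4$'s contributes exactly the three extra $1$-factors, yielding $r=3+4x$ and $s=\frac{v-4}{2}-2x$ for $0\le x\le \frac{v-4}{4}$, which is all of $J(v)$.

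This also dissolves the obstacle you flag around the base case $v=2h$ (i.e.\ $t=0$): there is no need for an explicit ad hoc design on $\mathbb{Z}_{2h}$, because $K_h-F$ (for $h$ even) decomposes into $\frac{h-2}{2}$ Hamiltonian $h$-cycles, so the very same weighting construction applies uniformly from $t=0$ onward. In short, the skeleton of your argument (blow up a cycle factorization by weight $2$ and plug in the URGDDs of Lemma \ref{lemmaD9}) is the right one and is the paper's, but you must run it on $K_{v/2}-F$ rather than $K_{v/2}$, and the deleted $1$-factor is not an optional correction but the source of the residue $r\equiv 3\pmod 4$.
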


\begin{proof}
Let $v=2h+4ht$. Starting with  a $C_h$-factorization of $K_{h+2ht}-F$, which comes from \cite{HS}, the assertion follows by a similar argument as in  Lemma \ref{lemmaC1}.
\end{proof}

\begin{lemma}
\label{lemmaC3} For every\/ $v\equiv 2h\pmod{4h}$, $h\geq3$ odd, $URD(v;K_2, S(C_h))\supseteq J(v)$.

\end{lemma}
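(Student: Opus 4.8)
The plan is to follow the weighting construction of Lemmas~\ref{lemmaC1} and~\ref{lemmaC2}, the one new ingredient being dictated by the parity of $v/2$. Since $h$ is odd and $v\equiv 2h\pmod{4h}$, write $v=2h(2t+1)$, $t\geq 0$, and put $m=v/2=h(2t+1)$; then $m$ is \emph{odd} and $m\equiv 0\pmod h$. First I would partition $V(K_v)$ into $m$ pairs $X^1,\dots,X^m$, so that $K_v=F^*\cup K_{2\times m}$, where $F^*$ is the $1$-factor of within-pair edges and $K_{2\times m}$ is the complete multipartite graph with parts $X^1,\dots,X^m$, i.e.\ $K_m$ with every vertex given weight $2$.

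Next, take a $C_h$-factorization $\cP_1,\dots,\cP_l$ of $K_m$, with $l=(m-1)/2$; this exists because $m$ is odd with $h\mid m$ (for $t=0$ it is just a Hamiltonian decomposition of $K_h$). Blowing up by $2$, each $h$-cycle of a factor $\cP_i$ becomes a copy of $C_{h(2)}$, and the $m/h=2t+1$ copies coming from $\cP_i$ have vertex sets partitioning $V(K_v)$. Now fix $x$ with $0\leq x\leq l$: on each $C_{h(2)}$ arising from one of the first $x$ factors place a $(K_2,S(C_h))$-URGDD$(4,0)$, and on each $C_{h(2)}$ arising from the remaining $l-x$ factors place a $(K_2,S(C_h))$-URGDD$(0,2)$ (input designs from Lemma~\ref{lemmaD9}). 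Uniting, over the $2t+1$ copies coming from a single factor, the $j$-th class of the URGDD of each of those copies produces a parallel class of $K_v$; this yields $4x$ $1$-factors and $2(l-x)$ classes of $h$-suns, and adjoining the leftover $1$-factor $F^*$ gives a $(K_2,S(C_h))$-URD$(v;1+4x,2(l-x))$. Since $l=(m-1)/2=(v-2)/4$, these are the pairs $(1+4x,\frac{v-2}{2}-2x)$, and letting $x$ run over $0,1,\dots,(v-2)/4$ we obtain exactly $J(v)$.

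The point needing the most care is the use of a $C_h$-factorization of the odd-order graph $K_m$: this is what replaces the ``$K_n-F$'' input used in Lemmas~\ref{lemmaC1} and~\ref{lemmaC2}, and it is the real reason the $h$-odd case is stated separately. Unlike there, no $1$-factor has been deleted from the base graph, so there are no $K_4$'s to factor (which is what contributed the ``$+3$'' before); instead $F^*$ survives untouched and supplies a single extra $1$-factor, matching the shape $r\equiv 1\pmod 4$ of $J(v)$ in this case. One should also check that the degree condition of Lemma~\ref{lemmaP0} is met, i.e.\ that each vertex plays the centre role in exactly half of the $h$-sun classes; this holds because the two sun-classes of the URGDD$(0,2)$ of Lemma~\ref{lemmaD9} interchange the roles of the $a_i$'s and the $b_i$'s, so the centre/ray counts balance copy by copy.
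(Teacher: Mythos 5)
Your construction is essentially the paper's own proof: blow up a $C_h$-factorization of $K_{v/2}$ (which the paper takes from \cite{ASSW}) by weight $2$, fill each inflated cycle with the URGDD$(4,0)$ or URGDD$(0,2)$ of Lemma~\ref{lemmaD9}, and adjoin the within-pair $1$-factor to obtain $r=1+4x$. The only differences are cosmetic: the paper handles $(h,t)=(3,0)$ separately via Lemma~\ref{lemmaD2} and cites \cite{ASSW} for the $C_h$-factorization of the odd-order complete graph, where you assert its existence directly (correctly, via the Hamiltonian decomposition of $K_h$ when $t=0$); your closing remark about centre/ray balance is a consequence of resolvability rather than something that needs separate verification.
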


\begin{proof}
Let $v=2h+4ht$. The case $h=3$ and $t=0$ corresponds to a $(K_2, S(C_{3}))$-URD $(6;1,2)$ which follows by Lemma
\ref{lemmaD2}. For $t\geq 1$, start with a $C_h$-factorization $\cP_1$, $\cP_2\ \ldots,$ $\cP_l$,  $l=ht+\frac{h-1}{2}$,  of $K_{h+2ht}$ which comes from \cite{ASSW} and give weight 2 to each point of $X$.  Fixed an integer $0\leq x \leq l$,  for each $h$-cycle $C$ of $x$ parallel classes  place on $C\times \{1,2\}$ a copy of a $(K_2, S(C_h))$-URGDD$(4,0)$ of $C_{(h)2}$, while  for each $h$-cycle $C$ of the remaining classes place a copy of a $(K_2, S(C_h))$-URGDD$(0,2)$ of $C_{(h)2}$ (the input designs are from Lemma
\ref{lemmaD9}). If we consider also  the 1-factor  consisting of the edges $ \{x_1,x_2\}$ for $x\in X$, the result is a resolvable decomposition of $K_{v}$ into  $1+4x$ 1-factors and $\frac{v-2}{2}-2x$ classes of $h$-suns. 
\end{proof}

Combining Lemmas \ref{lemmaC1}, \ref{lemmaC2}, \ref{lemmaC3}, we obtain our main theorem.

\begin{thm}
For each\/  $v\equiv
0\pmod{2h}$, $URD(v;K_2, S(C_h))= J(v)$.

\end{thm}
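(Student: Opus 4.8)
The plan is to prove the Main Theorem by combining the necessary‑condition lemmas (Lemmas~\ref{lemmaP0} and~\ref{lemmaP2}) with the three constructive lemmas (Lemmas~\ref{lemmaC1}, \ref{lemmaC2}, \ref{lemmaC3}). Concretely, fix $v\equiv 0\pmod{2h}$. By Lemma~\ref{lemmaP2} every pair $(r,s)$ for which a $(K_2,S(C_h))$-URD$(v;r,s)$ exists satisfies $(r,s)\in J(v)$, so $URD(v;K_2,S(C_h))\subseteq J(v)$; this handles the ``only if'' direction and uses nothing beyond what is already proved. For the reverse inclusion $J(v)\subseteq URD(v;K_2,S(C_h))$, split into the three congruence cases that define $J(v)$: $v\equiv 0\pmod{4h}$, $v\equiv 2h\pmod{4h}$ with $h$ even, and $v\equiv 2h\pmod{4h}$ with $h$ odd. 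In each case the corresponding Lemma (\ref{lemmaC1}, \ref{lemmaC2}, or \ref{lemmaC3}) gives exactly $URD(v;K_2,S(C_h))\supseteq J(v)$, so equality follows. Note that the degenerate case $v\equiv 0\pmod 2$ but $h$ arbitrary with $s=0$ is subsumed since a $(K_2,S(C_h))$-URD$(v;v-1,0)$ is just a $1$-factorization of $K_v$, which exists for all even $v$.

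The real content therefore lies entirely in the constructive lemmas, and the main obstacle is the weighting/blow‑up construction used there. The idea I would stress is the following: to realize a generic pair $(a+4x,\,\frac{v-\epsilon}{2}-2x)$ with $\epsilon\in\{2,4\}$ and $x$ ranging over its admissible interval, one starts from a $C_h$-factorization of an appropriate $K_n$ or $K_n-F$ (supplied by \cite{HS} or \cite{ASSW}), inflates every vertex by a factor of $2$, and then, parallel class by parallel class, chooses independently whether to overlay each $h$-cycle $C$ with a $(K_2,S(C_h))$-URGDD$(4,0)$ or a $(K_2,S(C_h))$-URGDD$(0,2)$ of $C_{h(2)}$ (Lemma~\ref{lemmaD9}). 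Picking the ``$(4,0)$'' option on $x$ of the $l$ base classes and ``$(0,2)$'' on the remaining $l-x$ produces $4x$ one‑factors and $2(l-x)$ $h$-sun classes from the blow‑up; adding the one‑factors coming from the treatment of the deleted $F$ (a $1$-factorization of $K_4$ on each $e\times\{1,2\}$) or from the pairs $\{x_1,x_2\}$ accounts for the residual $+3$ or $+1$ in the $r$-coordinate. One must check that $l$ equals the top value of $x$ in the relevant row of Table~2, i.e. $l=\frac{v-4}{4}$ when $\epsilon=4$ and $l=\frac{v-2}{4}$ when $\epsilon=2$, which is a direct arithmetic verification from $v=4ht$, $v=2h+4ht$ respectively.

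I would then isolate the small ``seed'' cases that fall outside the weighting range: $v=12$, $h=3$ (Lemma~\ref{lemmaD3}) for the $v\equiv 0\pmod{4h}$ family, and $v=6$, $h=3$ (Lemma~\ref{lemmaD2}) for the $v\equiv 2h\pmod{4h}$, $h$ odd family, both handled by explicit designs over $\ZZ_{12}$ and $\ZZ_6$. The $v\equiv 2h\pmod{4h}$, $h$ even family needs no seed case because the base factorization $K_{h+2ht}-F$ from \cite{HS} already exists for $t=0$. The one point demanding care, and the step I expect to be the genuine obstacle, is verifying that the $h$-sun overlays of Lemma~\ref{lemmaD9} glue correctly into parallel classes across the whole blown‑up graph: the $(4,0)$ design contributes $1$-factors that must combine across distinct $h$-cycles of the same base class into a single $1$-factor of $K_v$ (which works because the $h$-cycles of a base class partition the vertices), while the $(0,2)$ design contributes $h$-sun classes that are already spanning on each $C_{h(2)}$ and hence assemble into spanning $h$-sun classes of $K_v$; one must also confirm that the edges between the two ``clones'' of each base vertex are exactly covered by the $F$-treatment (in the $K_n-F$ constructions) or by the extra $1$-factor $\{\{x_1,x_2\}:x\in X\}$ (in the $K_n$ construction from \cite{ASSW}), so that every edge of $K_v$ is used exactly once. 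Once this edge‑accounting is in place, concatenating the three lemmas yields $URD(v;K_2,S(C_h))=J(v)$ for all $v\equiv 0\pmod{2h}$, which is the Main Theorem.
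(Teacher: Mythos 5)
Your proposal is correct and follows essentially the same route as the paper: the inclusion $URD(v;K_2,S(C_h))\subseteq J(v)$ from Lemma~\ref{lemmaP2}, and the reverse inclusion by the three case lemmas built on the weight-2 blow-up of $C_h$-factorizations of $K_{2ht}-F$ or $K_{h+2ht}$ with the URGDD$(4,0)$/URGDD$(0,2)$ overlays of Lemma~\ref{lemmaD9}, plus the seed cases $v=6,12$ for $h=3$. Your added checks (that $l$ matches the top of the range for $x$, and the edge-accounting for the inflated pairs) are exactly the verifications implicit in the paper's proof.
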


\end{document}